\numberwithin{equation}{section}
\newtheorem{thm}{Theorem}[section]
\newtheorem{prop}[thm]{Proposition}
\newtheorem{lem}[thm]{Lemma}
\newtheorem{rmk}[thm]{Remark}
\newcommand{\nc}{\newcommand}
\nc{\rc}{\renewcommand}
\nc{\se}{\section}
\nc{\sse}{\subsection}
\rc{\b}{\mathbb}
\rc{\c}{\mathcal}
\nc{\tn}{\textnormal}
\nc{\bZ}{\b Z}
\nc{\cF}{\c F}
\nc{\cN}{\c N}
\nc{\cP}{\c P}
\nc{\cQ}{\c Q}
\nc{\al}{{\alpha }}
\nc{\be}{{\beta }}
\nc{\inv}{ ^{-1}}
\nc{\su}{\subset}
\nc{\lan}{\langle}
\nc{\ran}{\rangle}
\nc{\rk}{\tn{rk}}
\nc{\cover}{\;\tilde\hookrightarrow\;}
\title{Symmetric chain decomposition of necklace posets}
\author{Vivek Dhand}
\begin{document}

\maketitle
\thispagestyle{empty}

\begin{abstract}
A finite ranked poset is called a symmetric chain order if it can be written as a disjoint union of rank-symmetric, saturated chains. If $\mathcal{P}$ is any symmetric chain order, we prove that $\mathcal{P}^n/\mathbb{Z}_n$ is also a symmetric chain order, where $\mathbb{Z}_n$ acts on $\mathcal{P}^n$ by cyclic permutation of the factors.
\end{abstract}

\se{Introduction}

Let $(\cP,<)$ be a finite poset.   A {\em chain} in $\cP$ is a sequence of the form $x_1 < x_2 < \dots < x_n $ where each $x_i \in \cP$.  For $x, y \in \cP$, we say $y$ {\em covers} $x$ (denoted $x \lessdot y$) if $x < y$ and there does not exist $z \in \cP$ such that $x < z$ and $z < y$.  A {\em saturated} chain in $\cP$ is a chain where each element is covered by the next. We say $\cP$ is {\em ranked} if there exists a function $\rk: \cP \to \bZ_{\geq 0}$ such that $x \lessdot y$ implies $\rk(y) = \rk(x) + 1$. The {\em rank} of $\cP$ is defined as $\rk(\cP) = \max\{\rk(x) \mid x \in \cP\} + \min\{\rk(x) \mid x \in \cP\}$.  A saturated chain $\{ x_1 \lessdot x_2 \lessdot \dots \lessdot x_n \}$ in a ranked poset $\cP$ is said to be {\em rank-symmetric} if $ \rk(x_1) + \rk(x_n) =   \rk(\cP)$. 

We say that $\cP$ has a {\em symmetric chain decomposition} if it can be written as a disjoint union of saturated, rank-symmetric chains.  A {\em symmetric chain order} is a finite ranked poset for which there exists a symmetric chain decomposition.

A finite product of symmetric chain orders is a symmetric chain order. This result can be proved by induction \cite{dB} or by explicit constructions (e.g.\! \cite{GK}).  Naturally, this raises the question of whether the quotient of a symmetric chain order under a given group action has a symmetric chain decomposition. For example, if $X$ is a set then $\bZ_n$ acts on the set $Map(\bZ_n,X) \simeq X^n$.  The elements of $X^n/\bZ_n$ are called $n$-{\em bead necklaces with labels in} $X$.  A symmetric chain decomposition of the poset of binary necklaces was first constructed by K. Jordan \cite{J}, building on the work of Griggs-Killian-Savage \cite{GKS}.  There have been recent independent proofs and generalizations of these results \cite{DMT,HS}.   The main result of this paper is the following:

\begin{thm}\tn{ If $\cP$ is a symmetric chain order, then $\cP^n/\bZ_n$ is a symmetric chain order. }\end{thm}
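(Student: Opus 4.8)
The plan is to argue by induction on $n$. Fix a symmetric chain decomposition $\cP=\bigsqcup_i C_i$ into saturated, rank-symmetric chains, and note first that $\cP^n/\bZ_n$ is automatically ranked, being the quotient of the ranked poset $\cP^n$ by a group of rank-preserving automorphisms. As a set, $\cP^n=\bigsqcup_{\mathbf i}B_{\mathbf i}$, where $\mathbf i=(i_1,\dots,i_n)$ runs over index tuples and the ``block'' $B_{\mathbf i}=C_{i_1}\times\cdots\times C_{i_n}$, regarded as an induced subposet of $\cP^n$, is exactly the product of chains with its usual order. Because each $C_i$ is rank-symmetric (the ranks of its top and bottom elements sum to $\rk(\cP)$), one gets $\rk(B_{\mathbf i})=n\,\rk(\cP)=\rk(\cP^n)$ --- this equality is the crucial point, and it is exactly what the scheme needs. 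Now $\bZ_n$ cyclically permutes the coordinates, hence permutes the blocks; grouping blocks into $\bZ_n$-orbits $\c O$ gives a $\bZ_n$-stable set partition of $\cP^n$, so $\cP^n/\bZ_n=\bigsqcup_{\c O}\Pi_{\c O}$, where $\Pi_{\c O}$ is the image in the quotient of the blocks in $\c O$. A union of symmetric chain decompositions of the various $\Pi_{\c O}$ --- whose chains need only be saturated and rank-symmetric \emph{in $\cP^n/\bZ_n$} --- is a symmetric chain decomposition of $\cP^n/\bZ_n$, so it suffices to handle each $\Pi_{\c O}$.

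Fix an orbit $\c O$, of some size $d\mid n$, with representative block $B_{\mathbf i}$; then $\mathbf i$ is periodic of minimal period $d$, and the stabilizer of $B_{\mathbf i}$ in $\bZ_n$ is the cyclic group $\lan\sigma^d\ran\cong\bZ_{n/d}$ generated by the shift by $d$ coordinates. Each $\bZ_n$-orbit contained in $\bigsqcup_{\mathbf j\in\c O}B_{\mathbf j}$ meets $B_{\mathbf i}$ in a single $\lan\sigma^d\ran$-orbit, which produces a rank-preserving, order-preserving bijection $B_{\mathbf i}/\lan\sigma^d\ran\to\Pi_{\c O}$. This need not be a poset isomorphism, since $\Pi_{\c O}$ may acquire extra comparabilities from relations between distinct blocks of $\c O$; but since $\rk(B_{\mathbf i}/\lan\sigma^d\ran)=\rk(B_{\mathbf i})=\rk(\cP^n)=\rk(\cP^n/\bZ_n)$ and a rank-one step between comparable elements of the ranked poset $\cP^n/\bZ_n$ is automatically a covering, a symmetric chain decomposition of $B_{\mathbf i}/\lan\sigma^d\ran$ transports along the bijection to a family of saturated, rank-symmetric chains of $\cP^n/\bZ_n$ partitioning $\Pi_{\c O}$. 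Finally, writing $\mathbf i=(j_1,\dots,j_d,j_1,\dots,j_d,\dots)$ and $\cQ=C_{j_1}\times\cdots\times C_{j_d}$, one has $B_{\mathbf i}\cong\cQ^{n/d}$ with $\lan\sigma^d\ran$ acting by cyclic permutation of the $n/d$ factors, hence $B_{\mathbf i}/\lan\sigma^d\ran\cong\cQ^{n/d}/\bZ_{n/d}$; and $\cQ$, a product of chains, is a symmetric chain order.

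Thus $\Pi_{\c O}$ receives a symmetric chain decomposition from any symmetric chain decomposition of $\cQ^{n/d}/\bZ_{n/d}$. If $d>1$ then $n/d<n$, and the inductive hypothesis applied to the symmetric chain order $\cQ$ provides one; the case $n=1$ is trivial, since $\cP^1/\bZ_1=\cP$. The remaining orbits are those with $d=1$, namely the blocks $B_{(i,\dots,i)}$, for which $\cQ=C_i$ is a single chain and the task is to find a symmetric chain decomposition of $C_i^n/\bZ_n$ --- the poset of $n$-bead necklaces labelled by the chain $C_i$. This is precisely the necklace case: established for chains of length $1$ by Jordan \cite{J}, and for arbitrary chains in \cite{DMT,HS} (and one could also supply a self-contained argument here).

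The whole statement therefore reduces, by a purely formal induction, to this one irreducible input, and it is there that the real difficulty lies: the single-chain necklace poset $C^n/\bZ_n$ admits no product-type reduction, and constructing its symmetric chain decomposition requires a genuinely combinatorial argument (ballot/bracketing sequences together with an explicit matching). Everything else --- the block decomposition, the identity $\rk(B_{\mathbf i})=\rk(\cP^n)$ forced by the rank-symmetry of the $C_i$, and the passage from an orbit of blocks to $\cQ^{n/d}/\bZ_{n/d}$ --- is routine, resting on the single principle that neither restricting to an induced subposet nor quotienting by rank-preserving automorphisms can destroy a saturated, rank-symmetric chain.
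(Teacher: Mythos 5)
Your reduction is sound as far as it goes, and it follows the same opening moves as the paper: decompose $\cP$ into chains, split $\cP^n$ into blocks $C_{i_1}\times\cdots\times C_{i_n}$, group the blocks into $\bZ_n$-orbits, and observe that an orbit of period $d$ contributes a piece covered (in the paper's sense of $\cover$) by $(C_{j_1}\times\cdots\times C_{j_d})^{n/d}/\bZ_{n/d}$, which for $d>1$ is handled by induction on $n$. This is exactly part (1) and (2) of the paper's Lemma \ref{KeyLemma} applied with $I=\{1,\dots,r\}$.

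The gap is the case $d=1$, i.e.\ $C^n/\bZ_n$ for a single chain $C$, which you correctly identify as ``where the real difficulty lies'' and then do not prove: you cite Jordan \cite{J} and \cite{DMT,HS} and gesture at ballot/bracketing arguments. That is not acceptable here, because a self-contained treatment of precisely this case is the substance of the theorem; the reduction you carry out is the routine part, and deferring the irreducible core to a black box leaves the proof incomplete (all the more so since the cited works treat binary or Boolean-lattice quotients, and the case needed is a chain with $m+1$ vertices for arbitrary $m$). The paper closes this case without any external input, by a second layer of the \emph{same} induction: it identifies $C^n/\bZ_n$ (minus its top and bottom necklaces) with the poset $\cQ(mn,m)$ of partition necklaces divisible by $m$ (Lemma \ref{GeneralEncoding}), decomposes that into the block-code pieces $\cQ_\al$ indexed by fundamental partition necklaces $\al$, and shows (Lemma \ref{Factorization}) that each $\cQ_\al$ is covered either by a product of chains (if $\al$ is aperiodic), or by $\cQ_{[\be]}^{s/d}/\bZ_{s/d}$ with $s/d<n$ (handled by the induction on $n$), or --- in the one remaining case $\al=[m,\dots,m]$ --- by $\cQ_{[m]}^n/\bZ_n$ where $\cQ_{[m]}$ is a chain with $m-1$ vertices, so the chain length drops by two and a descent on $m$ terminates the argument at chains with one or two vertices, the latter being the binary necklace poset which the same decomposition disposes of directly. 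To complete your proof you would need to supply this (or some other) argument for $C^n/\bZ_n$ rather than assert its existence.
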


We give a brief outline of the proof.  First, we show that the poset of $n$-bead binary necklaces is isomorphic to the poset of {\em partition necklaces}, i.e.\ $n$-bead necklaces labeled by positive integers which sum to $n$.  It turns out to be convenient to exclude the maximal and minimal binary necklaces, which correspond to those partitions of $n$ having $n$ parts and $0$ parts, respectively.  Let $\cQ(n)$ denote the poset of partition necklaces with these two elements removed. We decompose $\cQ(n)$ into rank-symmetric sub-posets $\cQ_\al$, running over partition necklaces $\al$ where 1 does not appear.  This decomposition corresponds to the ``block-code" decomposition of binary necklaces defined in \cite{GKS}.  

We can also extend this idea to non-binary necklaces.  In fact, the poset of $n$-bead $(m{+}1)$-ary necklaces embeds into the poset of $nm$-bead binary necklaces, and the image corresponds to the union of those $\cQ_\al \su \cQ(mn)$ such that every part of $\al$ is divisible by $m$.  

Next, we prove a ``factorization property" for $\cQ_\al \su \cQ(n)$.  If $P$ and $Q$ are finite ranked posets, we say that $P$ {\em covers} $Q$ (or $Q$ is {\em covered by} $P$) if there is a morphism of ranked posets from $P$ to $Q$ which is a bijection on the underlying sets. We denote this relation as $P \cover Q$.  Note that any ranked poset covered by a symmetric chain order is also a symmetric chain order.   If $\al$ is aperiodic, then $\cQ_\al$ is covered by a product of symmetric chains.  If $\al$ is periodic of period $d$, then $\cQ_\al$ is covered by the poset of $(n/d)$-bead necklaces labeled by $\cQ_\be$, for some aperiodic $d$-bead necklace $\be$.  

Finally, if $\cP$ is a symmetric chain order, then $\cP^n/\bZ_n$ has a decomposition into posets which are either products of chains, or posets of $d$-bead necklaces with labels in a product of chains (where $d < n$), or posets of $n$-bead $(m{+}1)$-ary necklaces for some $m \geq 1$.  In each case, we apply induction to finish the proof.

\se{Generalities on necklaces}

We begin by recalling some basic facts about $\bZ_n$-actions on sets.  We will use additive notation for the group operation of $\bZ_n$. The subgroups of $\bZ_n$ are of the form $\lan d \ran$ where $d$ is a positive divisor of $n$, and $\bZ_n/\lan d \ran \simeq \bZ_d$. If $X$ is a set with $\bZ_n$-action, let $X^{\lan d \ran}$ denote the set of $\lan d\ran$-fixed points in $X$.  Equivalently:
\[ X^{\lan d \ran} = \{x \in X \mid \lan d\ran \su Stab_{\bZ_n}(x) \}.  \]
Note that $X^{\lan c \ran} \su X^{\lan d \ran}$ if $c$ is a divisor of $d$.  Next, we define:
\[ X^{\{d\}} = \{x \in X \mid \lan d\ran = Stab_{\bZ_n}(x) \}.  \]
Of course, we have:
\[ X = \bigsqcup_{d|n} X^{\{d\}} \]
and the $\bZ_n$ action on $X^{\{d\}}$ factors through $\bZ_d$.  In other words, we have a bijection:
\[ X/\bZ_n \simeq \bigsqcup_{d|n} X^{\{d\}}/\bZ_d.   \]

Now consider the special case where $X = Map(\bZ_n,Y)$ for some arbitrary set $Y$, where $\bZ_n$ acts on the first factor. In other words,
\[ (af)(b) = f(a+b) \]
for any $a,b \in \bZ_n$ and $f : \bZ_n \to Y$.    Now the previous paragraph implies that:
\[ Map(\bZ_n,Y) = \bigsqcup_{d|n} Map(\bZ_n,Y)^{\{d\}} \]  
and
\[ Map(\bZ_n,Y)/\bZ_n = \bigsqcup_{d|n} Map(\bZ_n,Y)^{\{d\}}/\bZ_d . \]  
The elements of $Map(\bZ_n,Y)/\bZ_n$ are called $n$-{\em bead necklaces with labels in} $Y$.  

An element of $Map(\bZ_n,Y)^{\{d\}}/\bZ_d$ is said to be {\em periodic of period} $d$.    An element of $Map(\bZ_n,Y)^{\{n\}}/\bZ_n$ is said to be {\em aperiodic}.   Given a map $g : \bZ_n \to Y$, let $[g]$ denote the corresponding necklace in $Map(\bZ_n,Y)/\bZ_n$.   A $n$-bead necklace with labels in $Y$ can be visualized as a sequence of $n$ elements of $Y$ placed evenly around a circle, where we discount the effect of rotation by any multiple of $\frac{2 \pi }{n}$ radians.  Given $(y_1, \dots, y_n) \in Y^n$, let $[y_1, \dots, y_n]$ denote the corresponding $n$-bead necklace.

Our first observation is that an $n$-bead necklace of period $d$ is uniquely determined by any sequence of $d$ consecutive elements around the circle.  Moreover, as we rotate the circle, these $d$ elements will behave exactly like an aperiodic $d$-bead necklace.

\begin{prop} \tn{ There is a natural bijection between $n$-bead necklaces of period $d$ and aperiodic $d$-bead necklaces. } \end{prop}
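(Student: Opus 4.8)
The plan is to realize the bijection concretely, following the remark preceding the statement: an $n$-bead necklace of period $d$ is recorded by any $d$ consecutive beads, and as we rotate these behave like a $d$-bead necklace. Fix the label set $Y$ and write $X = Map(\bZ_n, Y)$. Since $d \mid n$, reduction modulo $d$ is a surjective group homomorphism $\pi : \bZ_n \to \bZ_d$ with $\ker\pi = \lan d\ran$. First I would observe that a map $f : \bZ_n \to Y$ lies in $X^{\lan d\ran}$ exactly when it is constant on the cosets of $\lan d\ran = \ker\pi$, i.e.\ exactly when $f = \bar f \circ \pi$ for a (necessarily unique) map $\bar f : \bZ_d \to Y$; thus $f \mapsto \bar f$ is a bijection $X^{\lan d\ran} \xrightarrow{\ \sim\ } Map(\bZ_d,Y)$ with inverse $g \mapsto g\circ\pi$.

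Next I would match up stabilizers. For $f = \bar f\circ\pi$ and $a \in \bZ_n$, one has $af = f$ if and only if $\bar f(\pi(a)+\pi(b)) = \bar f(\pi(b))$ for all $b$, i.e.\ $\pi(a) \in Stab_{\bZ_d}(\bar f)$ (using that $\pi$ is onto); hence $Stab_{\bZ_n}(f) = \pi\inv\big(Stab_{\bZ_d}(\bar f)\big)$. Since $\ker\pi = \lan d\ran$, this gives $Stab_{\bZ_n}(f) = \lan d\ran$ (that is, $f$ has exact period $d$) if and only if $Stab_{\bZ_d}(\bar f) = \{0\}$ (that is, $\bar f$ is aperiodic). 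So the bijection of the previous paragraph restricts to a bijection $X^{\{d\}} \simeq Map(\bZ_d,Y)^{\{d\}}$.

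Finally I would check equivariance and pass to orbits. The $\bZ_n$-action on $X^{\{d\}}$ factors through $\bZ_n/\lan d\ran$, which $\pi$ identifies with $\bZ_d$; under this identification, for $a \in \bZ_d$ with any lift $\tilde a \in \bZ_n$ we have $\overline{\tilde a f} = a\bar f$, because $(\bar f\circ\pi)(\tilde a + b) = \bar f(a + \pi(b))$. Thus the bijection $X^{\{d\}} \simeq Map(\bZ_d,Y)^{\{d\}}$ is $\bZ_d$-equivariant, and taking quotients yields
\[ \{\,n\text{-bead necklaces of period } d\,\} = X^{\{d\}}/\bZ_d \;\simeq\; Map(\bZ_d,Y)^{\{d\}}/\bZ_d = \{\,\text{aperiodic } d\text{-bead necklaces}\,\}, \]
a bijection evidently natural in $Y$. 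There is no genuinely hard step here: the one substantive input is the identity $\ker\pi = \lan d\ran$, valid precisely because $d \mid n$, and everything else is bookkeeping with stabilizers and group actions.
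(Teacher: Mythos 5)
Your proof is correct and follows essentially the same route as the paper: both identify $Map(\bZ_n,Y)^{\lan d\ran}$ with $Map(\bZ_d,Y)$ via the quotient map $\bZ_n\to\bZ_d$ (the paper cites the general fact $Map(G,Y)^H\simeq Map(G/H,Y)$ as an isomorphism of $G$-sets, while you verify it and the stabilizer correspondence by hand), then restrict to exact period $d$ and pass to $\bZ_d$-orbits. The extra explicitness in your stabilizer computation is fine but not a different argument.
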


\begin{proof} Recall the following general fact: if $G$ is a group, $H$ is a normal subgroup of $G$, and $Y$ is an arbitrary set, then there is an isomorphism of $G$-sets:
\[ Map(G,Y)^H \simeq Map(G/H,Y) \]
\[ f\mapsto (gH \mapsto f(g) ). \]
Moreover, the action of $G$ on each side factors through $G/H$. 
In particular, there is an isomorphism of $\bZ_n$-sets:
\[ Map(\bZ_n,Y)^{\lan d \ran} \simeq Map(\bZ_d , Y)\]
where the $\bZ_n$-action factors through $\bZ_d$.  Looking at elements of period $d$, we get:
\[ Map(\bZ_n,Y)^{\{ d \}} \simeq Map(\bZ_d,Y)^{\{d\}} \]
and so:
\[ Map(\bZ_n,Y)^{\{ d \}}/\bZ_d  \simeq Map(\bZ_d,Y)^{\{d\}}/\bZ_d. \]
\end{proof}
Now suppose that $Y$ is a disjoint union of non-empty subsets: 
\[ Y = \bigsqcup_{i \in I} Y_i \]
where $I$ is a finite set. Equivalently, we have a surjective map $\pi: Y \to I $, where $Y_i = \pi \inv(i)$ for each $i \in I$.  It follows that there is a surjective map:
\[ \pi_*: Map(\bZ_n,Y) \to Map(\bZ_n,I) \]
\[ \pi_*(f) = \pi \circ f. \]
Given a map $g: \bZ_n \to I$, we define:
\[ Map_g(\bZ_n,Y) = \pi_*\inv (g) =\{ f: \bZ_n \to Y \mid \pi \circ f = g  \} . \]
In other words, $f \in Map_g(\bZ_n,Y)$ if and only if $f(a) \in Y_{g(a)}$ for all $a \in \bZ_n$. Since $\pi_*$ is surjective, we have a decomposition:
\[ Map(\bZ_n,Y) = \bigsqcup_{g \in Map(\bZ_n, I)}  Map_g (\bZ_n,Y).  \]
Note that $Map_g (\bZ_n,Y)$ is not necessarily stable under the action of $\bZ_n$. If $a,b \in \bZ_n$ and $f \in Map_g (\bZ_n,Y)$, then:
\[ a(f)(b) = f(a+b) \in Y_{g (a+b)}  \]
so we have a bijection:
\[ Map_g (\bZ_n,Y) \simeq Map_{a g}(\bZ_n,Y) \]
induced by the action of $a \in \bZ_n$.  We define:
\[ Map_{[g]}(\bZ_n,Y) = \bigcup_{a \in \bZ_n} Map_{a g}(\bZ_n,Y). \]
Note that $\bZ_n$ acts on $Map_{[g]}(\bZ_n,Y)$.

\begin{rmk}   \label{EqRel} \tn{ We recall a basic observation which will make it easier to define maps on sets of necklaces.  Suppose $S$ and $T$ are sets equipped with equivalence relations $\sim$ and $\approx$, respectively.  Let $U$ be a subset of $S$ which has a non-trivial intersection with each equivalence class in $S$.  Then $U$ inherits the equivalence relation $\sim$ and the natural map from $U/\!\!\sim$ to $S/\!\!\sim$ is a bijection. Given a map $f: U \to T$ such that $u_1 \sim u_2 \implies f(u_1) \approx f(u_2)$ for all $u_1, u_2 \in U$, we obtain a map $(S/\sim) \simeq (U/\sim) \to (T/\approx)$.}\end{rmk} 

\begin{rmk}  \tn{ If $\al$ is a periodic $n$-bead necklace of period $d$ with labels in $I$, then:
\[ \al = [\underbrace{\be, \dots, \be}_{\frac{n}{d} \tn{ times}}] \]
where $\be = (\be_1, \dots, \be_d)$ is a $d$-tuple of elements in $I$ such that $[\be]$ is aperiodic. } \end{rmk}

\begin{lem} \label{KeyLemma} \tn{ Let $\pi: Y \to I$ be a surjective map where $I$ is finite.}

\tn{(1)  There is a natural decomposition:
\[ Map(\bZ_n,Y)/\bZ_n = \bigsqcup_{d|n}  \left( \bigsqcup_{\al \in Map(\bZ_n,I)^{\{d\}}/\bZ_d} Map_\al (\bZ_n,Y)/\bZ_n \right)  . \]}

\tn{(2) If $\al = [\be, \dots, \be]  \in Map(\bZ_n,I)^{\{d\}}/\bZ_d$, where $\be = (\be_1, \dots, \be_d)$, then there is a bijection:
\[  Map_{\al}(\bZ_n,Y)/\bZ_n \simeq (Y_{\be_1} \times \dots \times Y_{\be_d})^{\frac{n}{d}}/\bZ_{\frac{n}{d}} . \]}

\end{lem}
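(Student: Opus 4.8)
The plan is to prove the two parts in sequence, each by unwinding the definitions established in the preceding pages and invoking Proposition on periodic necklaces together with Remark~\ref{EqRel}.

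For part (1), I would start from the decomposition of $Map(\bZ_n,Y)/\bZ_n$ indexed by the fibers of $\pi_*$ modulo the $\bZ_n$-action, namely the identity $Map(\bZ_n,Y)/\bZ_n = \bigsqcup_{[g] \in Map(\bZ_n,I)/\bZ_n} Map_{[g]}(\bZ_n,Y)/\bZ_n$; this follows because the sets $Map_g(\bZ_n,Y)$ partition $Map(\bZ_n,Y)$, the action of $\bZ_n$ permutes them according to its action on $Map(\bZ_n,I)$, and $Map_{[g]}(\bZ_n,Y)$ is by construction the union over a full $\bZ_n$-orbit of index maps. Then I would simply refine the index set $Map(\bZ_n,I)/\bZ_n$ according to the period $d \mid n$ of the necklace $\al = [g]$, using the disjoint union $Map(\bZ_n,I)/\bZ_n = \bigsqcup_{d|n} Map(\bZ_n,I)^{\{d\}}/\bZ_d$ recalled at the start of the section. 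Writing $Map_\al$ for $Map_{[g]}$ when $\al = [g]$ gives exactly the claimed formula. The only point requiring a word of care is that $Map_\al(\bZ_n,Y)$ is genuinely $\bZ_n$-stable (noted already in the text) so that the quotient $Map_\al(\bZ_n,Y)/\bZ_n$ makes sense.

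For part (2), suppose $\al \in Map(\bZ_n,I)^{\{d\}}/\bZ_d$ is periodic of period $d$, represented by $g: \bZ_n \to I$ with $g = (\be_1, \dots, \be_d, \be_1, \dots, \be_d, \dots)$ repeated $n/d$ times, where $[\be]$ is aperiodic as a $d$-bead necklace. I want a bijection $Map_\al(\bZ_n,Y)/\bZ_n \simeq (Y_{\be_1} \times \dots \times Y_{\be_d})^{n/d}/\bZ_{n/d}$. The strategy is to use Remark~\ref{EqRel} with $S = Map_\al(\bZ_n,Y)$ under the $\bZ_n$-action and $U = Map_g(\bZ_n,Y)$ the single fiber: since the $\bZ_n$-orbit of any $f \in Map_\al(\bZ_n,Y)$ meets $Map_g(\bZ_n,Y)$ (the stabilizer of $\al$ in $\bZ_n$ is exactly $\lan d\ran$, so the orbit of index maps passes through $g$), the subset $U$ meets every orbit, and $U/\!\!\sim$ maps bijectively to $S/\bZ_n$. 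Now an element of $U = Map_g(\bZ_n,Y)$ is a function $f$ with $f(a) \in Y_{g(a)}$; identifying $\bZ_n$ with $\{0, 1, \dots, n-1\}$ and chopping into $n/d$ blocks of length $d$, this is precisely a point of $(Y_{\be_1} \times \dots \times Y_{\be_d})^{n/d}$. Under this identification the residual equivalence on $U$ — generated by those $a \in \bZ_n$ whose action preserves $U$, i.e. $a \in \lan d\ran$ — is exactly cyclic rotation of the $n/d$ blocks, which is the $\bZ_{n/d}$-action. Hence $U/\!\!\sim \ \simeq (Y_{\be_1} \times \dots \times Y_{\be_d})^{n/d}/\bZ_{n/d}$, and composing with the bijection from Remark~\ref{EqRel} finishes the proof.

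I expect the main obstacle to be bookkeeping rather than mathematical depth: one must check carefully that the subgroup of $\bZ_n$ that acts on the single fiber $U = Map_g(\bZ_n,Y)$ is precisely $\lan d\ran$ (this uses that $[\be]$ is aperiodic, so no smaller rotation fixes $g$), and that the induced action of $\lan d\ran \simeq \bZ_{n/d}$ on the block decomposition is the standard cyclic shift of $n/d$ factors and not some twisted version. Once the indexing is set up correctly — aligning the circle $\bZ_n$ with the $n/d$ consecutive blocks of size $d$ — both claims reduce to direct inspection, and Remark~\ref{EqRel} packages the passage to quotients cleanly so that no compatibility of equivalence relations needs to be verified by hand beyond the single implication $u_1 \sim u_2 \implies f(u_1) \approx f(u_2)$, which here is an equality of the two descriptions of the same orbit space.
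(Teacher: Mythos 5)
Your proposal is correct and takes essentially the same approach as the paper: part (1) by refining the orbit decomposition of $Map(\bZ_n,I)$ by period $d$, and part (2) by applying Remark~\ref{EqRel} to the single fiber $U = Map_g(\bZ_n,Y)$, observing that two elements of $U$ lie in the same $\bZ_n$-orbit only via $\lan d \ran$ (since $Stab(g) = \lan d\ran$) and that this residual $\lan d\ran$-action is the cyclic shift of the $n/d$ blocks. The only cosmetic difference is that the paper first writes $Map_{[g]}(\bZ_n,Y)$ explicitly as a disjoint union over $a \in \bZ_d$ before restricting to the $j=0$ component; the substance is identical.
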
 

\begin{proof}
(1) Since 
\[ Map(\bZ_n,Y) = \bigsqcup_{g \in Map(\bZ_n, I)}  Map_g (\bZ_n,Y) \]
and
\[ Map(\bZ_n,I) =  \bigsqcup_{d|n} Map(\bZ_n,I)^{\{d\}} \]
we see that:
\[ Map(\bZ_n,Y) = \bigsqcup_{d|n} \left( \bigsqcup_{g \in Map(\bZ_n, I)^{\{d\}}} Map_g (\bZ_n,Y) \right).   \]
As noted above, in order to make this an equality of $\bZ_n$-sets we need to take the coarser decomposition: 
\[ Map(\bZ_n,Y) =  \bigsqcup_{d|n} \left( \bigsqcup_{[g] \in Map(\bZ_n, I)^{\{d\}}/\bZ_d} Map_{[g]}(\bZ_n,Y) \right). \]
Now we simply take the quotient by $\bZ_n$ on both sides:
\[ Map(\bZ_n,Y)/\bZ_n =  \bigsqcup_{d|n} \left( \bigsqcup_{[g] \in Map(\bZ_n, I)^{\{d\}}/\bZ_d} Map_{[g]}(\bZ_n,Y)/\bZ_n \right). \]
Note that we are simply organizing the $n$-bead $Y$-labeled necklaces by looking at the periods of the underlying $n$-bead $I$-labeled necklaces.

(2) Let $g \in Map(\bZ_n,I)^{\{d\}}$ and let $a\in \bZ_n$.  By definition, $ag = (a + x)g$ if and only if $x \in \lan d \ran$.  So:
\[ Map_{a g} (\bZ_n,Y) = Map_{(a+x)g}(\bZ_n,Y)  \]
if $x \in \lan d \ran$. On the other hand, if
\[ h \in Map_{a g} (\bZ_n,Y) \cap Map_{(a+x)g}(\bZ_n,Y) \]
for some $x \in \bZ_n$, then $\pi \circ h = a g = (a + x) g$, which implies that $x \in \lan d \ran$.  The upshot is that we can actually write $Map_{[g]}(\bZ_n,Y)$ as a {\em disjoint} union over $\bZ_d$:
\[ Map_{[g]}(\bZ_n,Y) = \bigsqcup_{a \in \bZ_d} Map_{a g}(\bZ_n,Y). \]
Now consider the sequence of values $g(a)$ for $a\in \bZ_n$.  This sequence is of the form $(\be, \dots, \be)$, where $\be = (\be_1, \dots, \be_d)$. Therefore:
\[ Map_{g} (\bZ_n,Y) \simeq (Y_{\be_1} \times \dots \times Y_{\be_d})^{\frac{n}{d}}\]
and so:
\[ Map_{[g]}(\bZ_n,Y)\simeq \bigsqcup_{j = 0}^{d-1} (Y_{\be_{j+1}} \times\dots\times Y_{\be_d} \times Y_{\be_1} \times \dots \times Y_{\be_{j}})^{\frac{n}{d}}.\]
Let us apply Remark \ref{EqRel} to the following sets: 
\[ S =   \bigsqcup_{j = 0}^{d-1} (Y_{\be_{j+1}} \times\dots\times Y_{\be_d} \times Y_{\be_1} \times \dots \times Y_{\be_{j}})^{\frac{n}{d}} \quad \tn{and} \quad T =  (Y_{\be_1} \times \dots \times Y_{\be_d})^{\frac{n}{d}}. \]
The equivalence relations on $S$ and $T$ are defined by group actions: $\bZ_n$ acts on $S \simeq Map_{[g]}(\bZ_n,Y)$ and $\bZ_{\frac{n}{d}}$ acts on $T$ by cyclic permutation of the factors. Let $U$ be the subset of $S$ corresponding to the $j = 0$ component:
\[ U = (Y_{\be_1} \times \dots \times Y_{\be_d})^{\frac{n}{d}}. \]
Each element of $S$ is equivalent to an element of $U$, and the restricted equivalence relation on $U$ is given by the action of the subgroup $\lan d \ran$ which is exactly the same as the action of $\bZ_{\frac{n}{d}}$ by cyclic permutation of the factors. Therefore:
\[ S/\bZ_n \simeq U/\lan d \ran \simeq T/\bZ_{\frac{n}{d}} . \]
\end{proof}

\begin{rmk} \tn{ We can visualize the above result as follows: we choose a place to ``cut" an $n$-bead $Y$-labeled necklace in order to get an $n$-tuple of elements of $Y$. We can always rotate the original necklace so that the underlying $I$-labeled necklace has a given position with respect to the cut. Moreover, if the underlying $I$-labeled necklace has period $d$, then we can break the $n$-tuple into segments of size $d$ so that the corresponding $I$-labeled $d$-bead necklaces are aperiodic.   As we rotate the original necklace by multiples of $\frac{2 \pi}{d}$ radians, we will permute these segments among each other.} \end{rmk}

\se{Partition necklaces}

Let $n$ be a positive integer. Consider the set of ordered partitions of $n$ into $r$ positive parts:
\[ \cP(n,r) = \{ (a_1, \dots, a_r) \in \bZ_{>0}^r \mid  \sum_{i = 1}^r a_i= n\}  \]
Define:
\[ \cP(n) = \bigsqcup_{r = 1}^{n-1} \cP(n,r) \]
In other words, $\cP(n)$ is the set of non-empty ordered partitions of $n$ into positive parts, where at least one part is greater than 1.  Note that refinement of partitions defines a partial order on $\cP(n)$, and the rank of a partition is given by the number of parts.  

Let $\cQ(n)$ denote the set of necklaces associated to $\cP(n)$:
\[ \cQ(n) = \bigsqcup_{i = 1}^{n-1} \cP(n,r)/\bZ_r \]
In other words:
\[ \cQ(n) = \{ [a_1, \dots, a_r] \in \bZ_{>0}^r/\bZ_r \mid  1 \leq r \leq n-1, \sum_{i = 1}^r a_i= n \} \]
where $[a_1, \dots, a_r]$ denotes the $\bZ_r$-orbit of $(a_1, \dots, a_r)$.

The elements of $\cQ(n)$ are called {\em partition necklaces}.  Note that $\cQ(n)$ inherits the structure of a ranked poset from $\cP(n)$. 

Let $\cN(n,1)$ denote the set of $n$-bead binary necklaces with the necklaces $[0,\dots,0]$ and $[1,\dots,1]$ removed.

\begin{prop}\tn{ \label{BinaryEncoding} For any $n \geq 1$, there is an isomorphism of ranked posets:
\[ \psi_n: \cN(n,1) \simeq \cQ(n) . \] }\end{prop}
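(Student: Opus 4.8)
The strategy is to exhibit $\psi_n$ and its inverse via the classical ``block code,'' passing to necklaces with Remark \ref{EqRel}. The case $n=1$ is trivial, as $\cN(1,1)$ and $\cQ(1)$ are both empty, so assume $n\ge 2$. To an ordered partition $(a_1,\dots,a_r)\in\cP(n)$ associate the binary word
\[ \beta(a_1,\dots,a_r)=(1,\underbrace{0,\dots,0}_{a_1-1},1,\underbrace{0,\dots,0}_{a_2-1},\dots,1,\underbrace{0,\dots,0}_{a_r-1})\in\{0,1\}^n. \]
It has length $\sum a_i=n$ and exactly $r$ ones; since $r\le n-1$ at least one $a_i$ exceeds $1$, so $\beta(a_1,\dots,a_r)$ contains a zero and is not constant, hence represents an element of $\cN(n,1)$. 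Replacing $(a_1,\dots,a_r)$ by the cyclic rotation $(a_2,\dots,a_r,a_1)$ rotates $\beta(a_1,\dots,a_r)$ by $a_1$ positions, so $\beta$ sends $\bZ_r$-orbits into $\bZ_n$-orbits and, by Remark \ref{EqRel}, descends to a map $\bar\beta:\cQ(n)\to\cN(n,1)$.

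For the inverse, represent a class of $\cN(n,1)$ by a non-constant word $b=(b_1,\dots,b_n)$ with $b_1=1$; this is possible because $b$ has at least one $1$, and such representatives meet every class of $\cN(n,1)$, so Remark \ref{EqRel} applies with $U$ the set of such words. Listing the positions of the ones of $b$ as $1=p_1<p_2<\dots<p_r\le n$, set $\gamma(b)=(p_2-p_1,\,p_3-p_2,\,\dots,\,p_r-p_{r-1},\,n-p_r+p_1)$; this is an ordered partition of $n$ into $r$ positive parts, and since $b$ contains a zero we have $r\le n-1$, so $\gamma(b)\in\cP(n)$. A rotation of $b$ preserving $b_1=1$ cyclically permutes the parts of $\gamma(b)$, so $\gamma$ descends to a map $\psi_n:\cN(n,1)\to\cQ(n)$. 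One checks directly that $\beta$ and $\gamma$ are mutually inverse on the chosen representatives, whence $\psi_n$ and $\bar\beta$ are mutually inverse bijections.

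Finally, $\psi_n$ is rank-preserving: the number of parts of a partition necklace equals the number of ones of the corresponding binary necklace, which is its rank in $\cN(n,1)$ (so in particular $\rk(\cN(n,1))=\rk(\cQ(n))=n$). To see $\psi_n$ is an isomorphism of posets it suffices to match covering relations in both directions, since in a finite ranked poset $x\le y$ holds exactly when there is a saturated chain from $x$ to $y$. A cover $[b]\lessdot[c]$ in $\cN(n,1)$ amounts to flipping a single $0$ to $1$; inside the block code this splits the zero-run of one block $1\,0^{a_i-1}$, i.e.\ replaces the part $a_i$ by two positive parts summing to $a_i$, which is precisely a covering relation for refinement on $\cP(n)$ and hence on $\cQ(n)$. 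Conversely, refining a partition necklace by splitting one part into two corresponds to inserting a single $1$ into the appropriate zero-run. Thus $\bar\beta$ and $\psi_n$ carry covers to covers, and $\psi_n$ is an isomorphism of ranked posets.

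The one point requiring care is the bookkeeping at the level of quotients: one must confirm the equivariance statements that justify invoking Remark \ref{EqRel}, and, more subtly, that every covering relation of the \emph{quotient} poset $\cQ(n)$ really is induced by splitting one part of a representative composition rather than arising from the cyclic identification. This is fine because $\cQ(n)$ is ranked by the number of parts, so a cover there forces the part-count to increase by exactly one, and a refinement of a composition producing one extra part is a single split. I also use the elementary fact that a composition of $n$ has a part $>1$ if and only if it has at most $n-1$ parts, which is exactly what matches $\cP(n)$ and $\cQ(n)$ with the non-constant binary words and necklaces.
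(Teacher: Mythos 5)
Your proposal is correct and follows essentially the same route as the paper: encode a partition necklace by the gaps between consecutive ones of a binary necklace (the paper writes this as $[1,0^{c_1},\dots,1,0^{c_r}]\mapsto[c_1+1,\dots,c_r+1]$ with the evident inverse), and observe that flipping a $0$ to a $1$ corresponds to splitting one part. Your extra care with representatives, equivariance, and the rank argument for covers in the quotient just makes explicit what the paper leaves implicit.
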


\begin{proof}
Given a non-empty $n$-bead binary necklace $\be$ of rank $r$,  let $\psi_n(\be)$ be the necklace whose entries are given by the number of steps between consecutive non-zero entries of $\be$.  More precisely, $\psi_n$ is given by:
\[  [1,0^{c_1}, 1, 0^{c_2}, \dots, 1, 0^{c_r}] \mapsto [c_1 +1, \dots, c_r + 1] \]
Note that the right hand side is the necklace of a partition of $n$ into $r$ positive parts. The inverse of $\psi_n$ is given by:
\[ [a_1, \dots, a_r] \mapsto [1,0^{a_1-1}, 1, 0^{a_2-1}, \dots, 1, 0^{a_r - 1} ] . \] 
Moreover, changing a ``zero" to a ``one" in a binary necklace corresponds to a refinement of the corresponding partition necklace, so the above bijection is compatible with the partial orders and rank functions on each poset.
\end{proof}

An ordered partition $(a_1, \dots, a_r)$ and the corresponding partition necklace $[a_1, \dots, a_r]$ are said to be {\em fundamental} if each $a_i \geq 2$.  Let $\cF(n)$ denote the set of fundamental partition necklaces in $\cQ(n)$. 

Now we apply Remark \ref{EqRel} to the case where $S =\cP(n)$ and $T$ is the subset of $\cP(n)$ consisting of fundamental partitions.  Equip each set with the necklace equivalence relation, so $(S/\!\!\sim) = \cQ(n)$ and $(T/\!\!\approx) = \cF(n)$. Define the subset:
\[ U = \{  (1^{n_1},m_1,1^{n_2},m_2,...,1^{n_k},m_k) \in \cP(n) \mid n_i \geq 0, m_i \geq 2 \tn{ for all } 1 \leq i \leq k \} \]
Since we have excluded $(1,\dots,1)$ from $\cP(n)$, we see that any element of $\cP(n)$ is equivalent to some element in $U$.  Now define:
\[ f: U \to T \]
\[ (1^{n_1},m_1,1^{n_2},m_2,...,1^{n_k},m_k)\mapsto (m_1 + n_1, \dots,m_k + n_k). \]
Since $f$ is compatible with the respective equivalence relations, we obtain a map:
\[ \pi_n : \cQ(n) \to \cF(n) \]
\[ [1^{n_1}, m_1, 1^{n_2}, m_2, \dots, 1^{n_k}, m_k] \mapsto [m_1 + n_1, m_2 + n_2, \dots, m_k+n_k ] . \]
Note that $\pi_n$ restricts to the identity on $\cF(n)$.  In particular, $\pi_n$ is surjective. Therefore, we get a decomposition of $\cQ(n)$:
\[ \cQ(n) = \bigsqcup_{\al \in \cF(n)} \cQ_\al \]
where $\cQ_\al = \pi_n\inv(\al)$.   This decomposition is the same as the decomposition for binary necklaces defined in \cite{GKS}.  Indeed, the map $\pi_n \circ \psi_n$ is essentially the necklace version of the ``block-code" construction.  

If $m \geq 1$, a fundamental partition necklace $[a_1, \dots, a_r] \in \cF(n)$ is said to be {\em divisible} by $m$ if each $a_i$ is divisible by $m$.  Define the following sub-poset of $\cQ(n)$:
\[ \cQ(n,m) = \{\al \in \cQ(n) \mid \pi_n(\al) \tn{ is divisible by } m \} = \bigsqcup_{\underset{m|\al}{\al \in \cF(n)}} \cQ_\al . \]
Let $\cN(n,m)$ denote the set of $n$-bead $(m{+}1)$-ary necklaces with the necklaces $[0,\dots,0]$ and $[m,\dots,m]$ removed. We have the following generalization of Proposition 3.1.

\begin{lem}  \tn{ \label{GeneralEncoding} For any $n, m \geq 1$, there is an isomorphism of ranked posets:
\[ \psi_{n,m}: \cN(n,m) \simeq \cQ(mn,m). \] }\end{lem}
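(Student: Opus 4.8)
The plan is to realize $\cN(n,m)$ inside $\cN(mn,1)\simeq\cQ(mn)$ (Proposition \ref{BinaryEncoding}) by a ``block-replacement'' map, and to identify its image with $\cQ(mn,m)$. Begin with the order- and rank-preserving ``staircase'' embedding of the chain $\{0,1,\dots,m\}$ into the Boolean lattice $\{0,1\}^m$ sending $a$ to the word $1^a0^{m-a}$. Applied coordinatewise it gives an injection $Map(\bZ_n,\{0,\dots,m\})\to Map(\bZ_n,\{0,1\}^m)=Map(\bZ_{mn},\{0,1\})$ that carries the $\bZ_n$-action on the source to the action of the subgroup $\lan m\ran\su\bZ_{mn}$ on the target, hence (via Remark \ref{EqRel}) descends to a map $\phi$ from $n$-bead $(m{+}1)$-ary necklaces to $mn$-bead binary necklaces. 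Because the all-$0$ (resp.\ all-$m$) necklace goes to the all-$0$ (resp.\ all-$1$) binary necklace, $\phi$ restricts to $\cN(n,m)\to\cN(mn,1)$; it is rank-preserving, since the rank $a_1+\dots+a_n$ of $[a_1,\dots,a_n]$ is the number of $1$'s in its image, and it sends covers to covers, since raising one $a_i$ by $1$ flips a single $0$ to a $1$. Put $\psi_{n,m}=\psi_{mn}\circ\phi$; what remains is to show it matches $\cN(n,m)$ bijectively with $\cQ(mn,m)$, compatibly with the orders.

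The first substantive step is to see which binary necklaces lie in $\psi_{mn}\inv(\cQ(mn,m))$. Write $\tilde w\in\cN(mn,1)$ in cyclic run form $[1^{p_1}0^{q_1}\cdots1^{p_k}0^{q_k}]$ with all $p_i,q_i\geq 1$. Unwinding the definitions gives $\psi_{mn}(\tilde w)=[1^{p_1-1},q_1{+}1,\dots,1^{p_k-1},q_k{+}1]$, which is already in the canonical form used to define $\pi_{mn}$ (the parts $q_i{+}1\geq 2$ being the runs used for grouping), so $\pi_{mn}(\psi_{mn}(\tilde w))=[p_1+q_1,\dots,p_k+q_k]$. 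Hence $\tilde w\in\psi_{mn}\inv(\cQ(mn,m))$ exactly when $p_i+q_i\equiv 0\pmod m$ for all $i$; and since consecutive ``ascents'' of $\tilde w$ (positions $j\in\bZ_{mn}$ with $\tilde w_{j-1}=0$ and $\tilde w_j=1$) are spaced $p_1+q_1,\dots,p_k+q_k$ apart, this is equivalent to saying that all ascents of $\tilde w$ lie in one residue class mod $m$.

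Bijectivity onto $\cQ(mn,m)$ now follows. Given such a $\tilde w$, rotate it so every ascent occurs at a position $\equiv 0\pmod m$; this is unique up to rotation by a multiple of $m$, since $\tilde w$ has at least one ascent. Cut the normalized word into $n$ consecutive blocks of length $m$: no ascent occurs in the interior of a block, which forces each block to be $1^{a_i}0^{m-a_i}$ for a unique $a_i\in\{0,\dots,m\}$, giving a necklace $[a_1,\dots,a_n]$ that is neither all-$0$ nor all-$m$ (else $\tilde w$ is constant). By construction $\phi([a_1,\dots,a_n])=\tilde w$, so $[a_1,\dots,a_n]\mapsto\tilde w$ is two-sided inverse to $\psi_{n,m}$ (independence of the normalization is exactly that the admissible cuts differ by multiples of $m$). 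To see that $\psi_{n,m}$ is an isomorphism and not merely a bijective morphism, one checks that covers correspond in both directions: a cover in $\cN(mn,1)$ between two block-aligned necklaces is a single $0\!\to\!1$ bead-flip, and by the residue-class criterion the only flips keeping the necklace in $\cQ(mn,m)$ are those that lengthen a block's initial run of $1$'s (and, when the lower necklace has a single ascent, the one merging its two runs), which are exactly the images of the covers of $\cN(n,m)$ obtained by raising one part of $[a_1,\dots,a_n]$ by $1$.

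I expect the bookkeeping in the two middle paragraphs to be the main obstacle: reconciling the cyclic run form with the canonical form $[1^{n_1},m_1,\dots]$ built into $\pi_{mn}$, establishing the residue-class criterion without exceptional cases (degenerate blocks $0^m$ or $1^m$, necklaces with a single ascent), verifying that within $\cQ(mn,m)$ every cover already has rank difference $1$, and carrying out the parallel case analysis for cover-reflection. Everything downstream of the residue-class criterion is then routine. (When $m=1$ the staircase embedding is the identity, $\cQ(mn,m)=\cQ(n)$, and $\psi_{n,m}=\psi_n$, so this recovers Proposition \ref{BinaryEncoding}.)
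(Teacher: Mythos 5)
Your proposal is correct and follows essentially the same route as the paper: the substitution $j\mapsto 1^{j}0^{m-j}$ composed with $\psi_{mn}$, verification that the block-code of the image is divisible by $m$, and inversion by rotating to a block-aligned representative and cutting into $n$ segments of length $m$ (your residue-class criterion is exactly the paper's divisibility computation in different clothing). The one point where you go beyond the paper is in flagging and sketching the order-reflection/cover analysis, which the paper dismisses with ``clearly a morphism of ranked posets''; your flagged bookkeeping items are genuinely routine and do not hide a gap.
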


\begin{proof} 
Given an $n$-bead $(m{+}1)$-ary necklace, we construct an $mn$-bead binary necklace via the substitution: $j \mapsto 1^{j} 0^{m-j} $, and then we apply the map $\psi_{mn}$ from Proposition \ref{BinaryEncoding}.  This composition is clearly a morphism of ranked posets. Here is an explicit formula for $\psi_{n,m}$:
\[ [b_1, 0^{c_1}, b_2, 0^{c_2}, \dots, b_r, 0^{c_r}] \mapsto [1^{b_1-1}, m(c_1+1) - b_1 +1, \dots, 1^{b_r - 1}, m(c_r+1) - b_r + 1 ]  \]
where each $b_i\geq 1$ and $c_i\geq 0$. The sum of the terms in the partition necklace is:
\[  \sum_{i = 1}^r  (b_i -1 + m(c_i + 1) - b_i + 1) = m (r + \sum_{i = 1}^r c_i) = mn\]
as desired.  Let us check that $\pi_{mn} \circ \psi_{n,m} (\al)$ is divisible by $m$ for all $\al \in \cN(n,m)$.  Consider the element:
\[ \al = [b_1, 0^{c_1}, b_2, 0^{c_2}, \dots, b_r, 0^{c_r}] . \]   If $c_i > 0$ or $b_i < m$, then the terms $1^{b_i-1}$ and $m(c_i+1) - b_i +1$ in $\psi_{m,n}(\al)$ merge together under $\pi_{mn}$ to give $m(c_i+1)$.  On the other hand, whenever $b_i = m$ and $c_i = 0$, we will get a $1^m$ term in $\psi_{m,n}(\al)$.  Applying $\pi_{mn}$ will result in adding $m$ to the next occurrence of $m(c_j+1)$, where $c_j > 1$.  In other words:
\[ \pi_{mn}(\psi_{n,m}(\al)) = [me_1,\dots,me_s]\] 
where $\pi_n(c_1+1, \dots, c_r+1) = [e_1, \dots, e_s]$, and this result is indeed divisible by $m$.

By reversing the above process, we get a formula for the inverse of $\psi_{n,m}$.  An arbitrary element of $\cQ(mn,m)$ is of the form:
\[ [1^{n_1}, m_1, 1^{n_2}, m_2, \dots, 1^{n_k}, m_k]   \]
where each $m_i \geq 2$, each $m_i + n_i$ is divisible by $m$, and $\sum_{i=1}^k (m_i + n_i) = mn$.  The corresponding $mn$-bead binary necklace is:
\[ [1^{n_1+1}, 0^{m_1 - 1}, \dots , 1^{n_k+1}, 0^{m_k - 1}]. \]
Now we need to apply the substitution $ 1^{j} 0^{m-j}\mapsto j $.  Since $m_i + n_i$ is divisible by $m$, we can apply this to each block $(1^{n_i+1}, 0^{m_i - 1})$ separately.  Furthermore, we should break each block into segments of size $m$ and apply the substitution to each segment.  Therefore, $(1^{n_i+1}, 0^{m_i - 1})$ looks like:
\[ (\underbrace{1^m, 1^m, \dots, 1^m}_{q_i \tn{ times}}, 1^{r_i}, 0^{m-r_i}, 0^{m_i - 1 - (m-r_i)}). \]
where $q_i$ is the quotient of the division of $n_i + 1$ by $m$ and $r_i$ is the remainder. Note that $m_i - 1 - (m - r_i) = m_i - 1 -m + (n_i + 1 - mq_i) = m_i + n_i - mq_i -m$, which is divisible by $m$. Therefore, the inverse of $\psi_{n,m}$ is given by the following formula:
\[ [1^{n_1}, m_1, 1^{n_2}, m_2, \dots, 1^{n_k}, m_k]  \mapsto [m^{q_1},r_1, 0^{t_1} \dots, m^{q_k},r_k, 0^{t_k}] \]
where: 
\[ n_i+1 = mq_i + r_i \tn{ such that } 0 \leq r_i < m \]
and
\[ t_i = \frac{m_i + n_i}{m} - q_i - 1 .\]
Note that the number of beads in the above necklace is:
\[ \sum_{i = 1}^k  \left( q_i + 1 + \frac{m_i + n_i}{m} - q_i - 1\right) = \frac{1}{m} \sum_{i = 1}^k (m_i + n_i) = \frac{mn}{m} = n\]
as desired.
\end{proof}

\begin{lem} \label{Factorization}  \tn{ Let $\al = [a_1,\dots, a_r] \in \cF(n)$. If $\al$ is aperiodic, then:
\[ \cQ_{[a_1]} \times \dots \times \cQ_{[a_r]} \cover \cQ_\al. \]
If $\al$ is periodic of period $d$  and $\al = [\underbrace{\be, \dots, \be}_{\frac{r}{d} \tn{ times}}] $, then: 
\[\cQ_{[\be]}^{\frac{r}{d}}/\bZ_{\frac{r}{d}} \cover \cQ_\al.\] } 
\end{lem}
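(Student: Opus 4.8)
The plan is to identify $\cQ_\al$ with a poset of $r$-bead necklaces labelled by the single-part posets $\cQ_{[a_i]}$, and then apply Lemma~\ref{KeyLemma}. I would begin by describing the pieces. For an integer $a\geq 2$, an element of $\cQ_{[a]}=\pi_a\inv([a])$ is a partition necklace of $a$ all of whose parts but one equal $1$, i.e.\ of the form $[1^{c},m]$ with $m\geq 2$ and $c+m=a$; hence $\cQ_{[a]}$ is a saturated, rank-symmetric chain with ranks $1,2,\dots,a-1$. Next, every $\gamma\in\cQ(n)$ has a canonical cyclic \emph{block decomposition}: reading around the necklace, glue each maximal run of $1$'s to the part $\geq 2$ immediately following it. Each block is of the form $(1^{c_i},m_i)$, hence an element of $\cQ_{[a_i]}$ with $a_i=c_i+m_i$, and by construction $\pi_n(\gamma)$ is exactly the necklace whose parts are the block-sums; thus $\gamma\in\cQ_\al$ precisely when the cyclic sequence of block-sums equals $\al=[a_1,\dots,a_r]$.

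Now I would set up Lemma~\ref{KeyLemma} with $I=\bZ_{\geq 2}$, with $Y=\bigsqcup_{a\geq 2}\cQ_{[a]}$ and $\pi\colon Y\to I$ the map having $Y_a=\cQ_{[a]}$, and with $r$ beads. Sending $\gamma\in\cQ_\al$ to its necklace of blocks (and, conversely, concatenating the blocks, which is unambiguous since each block has a unique part $\geq 2$ and so a canonical linearization) gives a bijection $\cQ_\al\simeq Map_\al(\bZ_r,Y)/\bZ_r$, where $\al$ on the right denotes the orbit of $(a_1,\dots,a_r)$ in $Map(\bZ_r,I)$. I would then upgrade this to an isomorphism of ranked posets: rank on both sides is the total number of parts $=\sum_i(\tn{number of parts of block }i)$, and a covering relation $\gamma\lessdot\gamma'$ with $\gamma,\gamma'\in\cQ_\al$ must split the unique part $m_i\geq 2$ of some block as $1+(m_i-1)$ --- splitting a $1$ is impossible, and splitting $m_i$ either as $p'+p''$ with $p',p''\geq 2$ or with a trailing $1$ would create an extra block and change $\pi_n(\gamma')$ --- so covers in $\cQ_\al$ correspond exactly to covers in a single factor $\cQ_{[a_i]}$, which is precisely the covering relation on the necklace poset $Map_\al(\bZ_r,Y)/\bZ_r$.

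The two cases now drop out of Lemma~\ref{KeyLemma}(2). If $\al$ is aperiodic, take $d=r$: the lemma gives $Map_\al(\bZ_r,Y)/\bZ_r\simeq Y_{a_1}\times\dots\times Y_{a_r}=\cQ_{[a_1]}\times\dots\times\cQ_{[a_r]}$, so $\cQ_{[a_1]}\times\dots\times\cQ_{[a_r]}$ is in fact isomorphic to $\cQ_\al$ (and therefore covers it). If $\al$ is periodic of period $d$ with $\al=[\be,\dots,\be]$ and $\be=(\be_1,\dots,\be_d)$, the lemma gives $\cQ_\al\simeq(\cQ_{[\be_1]}\times\dots\times\cQ_{[\be_d]})^{r/d}/\bZ_{r/d}$; since $[\be]\in\cF(\tfrac{nd}{r})$ is aperiodic, the previous case yields an isomorphism $\cQ_{[\be]}\simeq\cQ_{[\be_1]}\times\dots\times\cQ_{[\be_d]}$, and applying it identically in each of the $r/d$ coordinates gives a $\bZ_{r/d}$-equivariant isomorphism, which descends to $\cQ_{[\be]}^{r/d}/\bZ_{r/d}\simeq\cQ_\al$, in particular $\cQ_{[\be]}^{r/d}/\bZ_{r/d}\cover\cQ_\al$.

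The step I expect to require the most care is the promotion of the set bijection to a morphism of ranked posets, i.e.\ the analysis of covering relations inside $\cQ_\al$: Lemma~\ref{KeyLemma} is purely set-theoretic, so one must verify directly that refinement within $\cQ_\al$ can only act block-by-block and that it matches the covering relation in the chain $\cQ_{[a_i]}$. The periodic case additionally needs the observation --- routine, but worth stating --- that the aperiodic isomorphism is equivariant for the cyclic action permuting the $r/d$ repeated blocks.
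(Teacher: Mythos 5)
Your proposal is correct and follows essentially the same route as the paper: both identify $\cQ_\al$ with the poset of $r$-bead necklaces labelled by the disjoint union $\bigsqcup_m \cQ_{[m]}$ of single-block chains, apply Lemma \ref{KeyLemma}(2) in the aperiodic and periodic cases, and then verify that covering relations are realized block-by-block inside a single chain $\cQ_{[a_i]}$. You spell out the block-decomposition bijection and the cover analysis more explicitly than the paper does, but the underlying argument is the same.
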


\begin{proof}
If $m \geq 2$, note that $\cQ_{[m]}$ is a chain with $m-1$ vertices.  We will apply Lemma \ref{KeyLemma} to the following set:
\[ \cQ = \bigsqcup_{m = 2}^n \cQ_{[m]}. \]
Note that our indexing set is $I = \{2, \dots, n\}$. Let $\al = [a_1, \dots,a_r] \in \cF(n)$. Since $a_1 + \dots + a_r = n$, we know that each $a_i \leq n$, which implies that $\al$ is labeled by elements of $I$.  If $\al$ is aperiodic, it follows from part (2) of Lemma \ref{KeyLemma} that we have a rank-preserving bijection:
\[ Map_\al(\bZ_r,\cQ) /\bZ_r \simeq \cQ_{[a_1]} \times \dots \times \cQ_{[a_r]} . \]
On the other hand, if $\al = [\be, \dots, \be]  \in Map(\bZ_r,I)^{\{d\}}/\bZ_d$, where $\be = (\be_1, \dots, \be_d)$, then we have rank-preserving bijections:
\[  Map_{\al}(\bZ_r,\cQ)/\bZ_r \simeq (\cQ_{[\be_1]} \times \dots \times \cQ_{[\be_d]})^{\frac{r}{d}}/\bZ_{\frac{r}{d}} \simeq \cQ_{[\be]}^{\frac{r}{d}}/\bZ_{\frac{r}{d}} \]
where the second bijection exists due to the fact that $[\be]$ is aperiodic.  It remains to check that the poset relations are preserved. Indeed, any covering relation among two necklaces labeled by $\cQ_{[\be_1]} \times \dots \times \cQ_{[\be_d]}$ will correspond to a covering relation within a chain $\cQ_{[\be_i]}$ for some $i$, which will also be a covering relation among the corresponding $\cQ$-labeled necklaces.
\end{proof}

\begin{rmk}\tn{ The above Lemma provides an explanation of why it is easier to find a symmetric chain decomposition of $n$-bead binary necklaces if $n$ in prime \cite{GKS}.  Indeed, in this case all non-trivial necklaces are aperiodic, so each $\cQ_\al$ is covered by a product of symmetric chains and we can apply the Greene-Kleitman rule. }\end{rmk}

\se{Proof of the theorem}

\begin{thm}\tn{ \label{MainTheorem} If $\cP$ is a symmetric chain order, then $\cP^n/\bZ_n$ is a symmetric chain order. }\end{thm}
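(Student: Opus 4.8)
The plan is to prove the theorem by induction on $n$. The case $n=1$ is trivial, so fix $n\geq 2$ and assume the statement for all smaller values of $n$ and all symmetric chain orders. Choose a symmetric chain decomposition $\cP=\bigsqcup_{i\in I}D^{(i)}$ of $\cP$ into chains, and let $\pi\colon\cP\to I$ send each element to the chain containing it. Applying Lemma \ref{KeyLemma} to $\pi$ yields
\[ \cP^n/\bZ_n \;=\; \bigsqcup_{d\mid n}\ \bigsqcup_{\al\in Map(\bZ_n,I)^{\{d\}}/\bZ_d} Map_\al(\bZ_n,\cP)/\bZ_n, \]
and if $\al$ is the $(n/d)$-fold repetition of $\be=(\be_1,\dots,\be_d)$, then (arguing exactly as in the proof of Lemma \ref{Factorization}, since each $D^{(\be_j)}$ is a chain) the summand $Map_\al(\bZ_n,\cP)/\bZ_n$ is covered as a ranked poset by $R^{n/d}/\bZ_{n/d}$, where $R:=D^{(\be_1)}\times\dots\times D^{(\be_d)}$. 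Since each $D^{(i)}$ is a rank-symmetric chain we have $\rk D^{(i)}=\rk\cP$, so $\rk\big(R^{n/d}/\bZ_{n/d}\big)=n\rk\cP=\rk(\cP^n/\bZ_n)$; thus every summand is a rank-symmetric sub-poset of $\cP^n/\bZ_n$, and it is enough to show that each $R^{n/d}/\bZ_{n/d}$ is a symmetric chain order, since then the symmetric chain decompositions of the summands glue together.

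Here $R$ is a product of chains, hence a symmetric chain order. If $d>1$ then $n/d<n$, and $R^{n/d}/\bZ_{n/d}$ is a symmetric chain order by the inductive hypothesis. If $d=1$ then $\al$ is constant and $R$ is a single chain, so the whole argument reduces to the following claim: \emph{if $D$ is a chain, then $D^n/\bZ_n$ is a symmetric chain order.}

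I would prove this claim by a secondary induction on the number $k$ of vertices of $D$; we may assume the ranks of $D$ are $0,1,\dots,k-1$, and the case $k=1$ is trivial. For $k\geq 2$, the poset $D^n/\bZ_n$ is the poset of $n$-bead $k$-ary necklaces, that is, $\cN(n,k-1)$ with its minimum $[0^n]$ and maximum $[(k-1)^n]$ adjoined; it has rank $n(k-1)$. By Lemma \ref{GeneralEncoding}, $\cN(n,k-1)\simeq\cQ(mn,m)$ for $m=k-1$, and $\cQ(mn,m)=\bigsqcup\cQ_\al$ over the fundamental partition necklaces $\al$ of $mn$ divisible by $m$; one checks that each $\cQ_\al$ has rank $mn$ and is therefore a rank-symmetric sub-poset of $D^n/\bZ_n$. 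The piece $\cQ_{[mn]}$ is itself a saturated chain from rank $1$ to rank $mn-1$, and prepending $[0^n]$ and appending $[(k-1)^n]$ makes it a rank-symmetric saturated chain in $D^n/\bZ_n$ that absorbs the two extremal necklaces. For each remaining $\al=[a_1,\dots,a_r]$ — so $r\geq 2$, and $r\leq n$ because each $a_i$ is a multiple of $m$ with $a_i\geq 2$ — I apply Lemma \ref{Factorization}. If $\al$ is aperiodic, $\cQ_\al$ is covered by the product of chains $\cQ_{[a_1]}\times\dots\times\cQ_{[a_r]}$, which is a symmetric chain order. If $\al$ is periodic of period $e$, then $\al$ is the $(r/e)$-fold repetition of an aperiodic $[\be]$ and $\cQ_\al$ is covered by $\cQ_{[\be]}^{r/e}/\bZ_{r/e}$: when $e\geq 2$, $\cQ_{[\be]}$ is a product of chains and $r/e<n$, so this is a symmetric chain order by the primary inductive hypothesis; when $e=1$ we have $\al=[a^{r}]$ with $ar=mn$, and if $r<n$ then $\cQ_\al$ is covered by $\cQ_{[a]}^{r}/\bZ_r$, a symmetric chain order by the primary hypothesis, while if $r=n$ then $a=m$ and $\cQ_\al$ is covered by $\cQ_{[m]}^{n}/\bZ_n$, the $n$-th necklace power of a chain with $m-1=k-2$ vertices, which is a symmetric chain order by the secondary inductive hypothesis. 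Assembling these pieces establishes the claim, and the theorem follows.

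The step I expect to be the main obstacle is the rank bookkeeping: at every stage one must check that the pieces produced are genuinely \emph{rank-symmetric} sub-posets of $\cP^n/\bZ_n$, since that is exactly what makes it legitimate to glue their symmetric chain decompositions together — and in the chain case there is the additional subtlety of absorbing the two extremal necklaces $[0^n]$ and $[(k-1)^n]$ into the single chain $\cQ_{[mn]}$ without disrupting the rest of the decomposition. The remaining verifications are routine: that each reduction strictly decreases one of the parameters $n$, $e$, or $k$, so the nested inductions terminate, and that the coverings supplied by Lemmas \ref{KeyLemma} and \ref{Factorization} always point from the product-type poset to the necklace-type poset.
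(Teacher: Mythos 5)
Your proposal is correct and follows essentially the same route as the paper: the same decomposition via Lemma \ref{KeyLemma}, reduction of the $d=1$ case to necklace powers of a chain, and the same use of Lemmas \ref{GeneralEncoding} and \ref{Factorization}, with the paper's ``drop the chain length by two and repeat'' step recast as your secondary induction on $k$. Your explicit absorption of $[0^n]$ and $[(k-1)^n]$ into the chain $\cQ_{[mn]}$ and the rank bookkeeping are slightly more careful than the paper's appeal to ``centered subposets,'' but the argument is the same.
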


\begin{proof}
The statement is trivial for $n = 1$. Assume that the theorem is true for any $n' < n$. Let $C_1, \dots, C_r$ denote the chains in a symmetric chain decomposition of $\cP$.  We may assume that:
\[ \cP = \bigsqcup_{i = 1}^r C_i . \]
If we let $I = \{1, 2, \dots, r\}$ and apply part (1) of Lemma \ref{KeyLemma} to $\cP$, we obtain:
\[ Map(\bZ_n,\cP)/\bZ_n = \bigsqcup_{d|n}  \left( \bigsqcup_{\al \in Map(\bZ_n,I)^{\{d\}}/\bZ_d} Map_\al (\bZ_n,\cP)/\bZ_n \right)  . \]
Now we apply part (2) of Lemma \ref{KeyLemma}. If $\al = [a_1, \dots, a_n]$ is an aperiodic $n$-bead necklace with labels in $I$, then:
\[  C_{a_1} \times \dots \times C_{a_n} \cover Map_{\al}(\bZ_n, \cP). \]
Since $C_{a_1} \times \dots \times C_{a_n}$ is a symmetric chain order, it follows that $Map_{\al}(\bZ_n, \cP)$ is a symmetric chain order. Also note that $C_{a_1} \times \dots \times C_{a_n}$ is a centered subposet of $Map(\bZ_n,\cP)/\bZ_n$. On the other hand, if $\al = [\be, \dots, \be]$ is a periodic $n$-bead necklace with labels in $I$, where $\be = (\be_1, \dots, \be_d)$, then:
\[ (C_{\be_1} \times \dots \times C_{\be_d})^\frac{n}{d}/\bZ_{\frac{n}{d}} \cover Map_{\al}(\bZ_n, \cP)/\bZ_n.\] 
Again, note that this poset is a centered subposet of $Map(\bZ_n,\cP)/\bZ_n$ since it is a cyclic quotient of a centered subposet of $\cP^n$.

If $d > 1$, then $\frac{n}{d} < n$ and $(C_{\be_1} \times \dots \times C_{\be_d})$ is a symmetric chain order, so 
\[(C_{\be_1} \times \dots \times C_{\be_d})^\frac{n}{d}/\bZ_{\frac{n}{d}} \] is a symmetric chain order by induction.

If $d = 1$, then:
\[ C^n/\bZ_n \cover Map_{\al}(\bZ_n, \cP)/\bZ_n   \]
where $C$ is a chain with $m+1$ vertices, for some $m \geq 1$.   It suffices to consider the centered subposet $\cN(n,m)$.  By Lemma \ref{GeneralEncoding}, we have: 
\[ \cN(n,m) \simeq \cQ(mn,m). \]
If $\cQ_\al \su \cQ(mn,m)$, then $\al = [ma_1, \dots, ma_s]$, where $a_1 + \dots + a_s = n$.  In particular, note that $s \leq n$. By Lemma \ref{Factorization}, there are two possibilities for $\cQ_\al$.  If $\al$ is aperiodic, $\cQ_\al$ is a product of chains, so it is a symmetric chain order.  If $\al$ is periodic of period $d$, then:
\[\cQ_{[\be]}^{\frac{s}{d}} / \bZ_{\frac{s}{d}}  \cover \cQ_\al  \]
where $[\be]$ is a $d$-bead aperiodic necklace.  In particular, $\cQ_{[\be]}$ is itself a product of chains (hence a symmetric chain order). We know that $\be = (mc_1, \dots, mc_d)$, where $c_1 + \dots + c_d = \frac{dn}{s}$.  There are three possible cases:

(i) If $d > 1$, then $\frac{s}{d} < n$.  Since $\cQ_{[\be]}$ is a symmetric chain order, by induction we conclude that
\[ \cQ_{[\be]}^{\frac{s}{d}} /\bZ_{\frac{s}{d}} \]
is a symmetric chain order.

(ii) If $d = 1$ and $s < n$ then $Q_{[\be]}$ is a single chain, so $Q_{[\be]}^s/\bZ_s$ is a symmetric chain order by induction. 

(iii) If $d = 1$ and $s = n$, then $\be = (m)$ and $\al = [m, \dots, m]$.  In this case:
\[\cQ_{[m]}^n/\bZ_{n} \cover \cQ_\al . \]
Since $\cQ_{[m]}$ is a chain with $m-1$ vertices, we see that we have returned to the case of the $\bZ_n$-quotient of the $n$-fold power of a single chain.  However, note that the we have managed to decrease the length of the chain by two, i.e. from $m+1$ vertices to $m-1$ vertices.  Now we can again apply Lemma \ref{GeneralEncoding} and Lemma \ref{Factorization} to the centered subposet  $\cN(n,m-2)$, etc.

Eventually, after we go through this argument enough times, we will eventually reach the case of:
\[ C^n/\bZ_n \]
where $C$ is a chain with one or two vertices.  If $|C| = 1$, there is nothing to show.  So we are left with the case where $C$ is a chain with two vertices, i.e. the poset of binary necklaces.  It suffices to look at the centered subposet $\cN(n,1)$.  By Proposition \ref{BinaryEncoding}, 
\[ \cN(n,1) \simeq \cQ(n). \]
Again, we consider the subposets $\cQ_\al$. As usual, if $\al$ is aperiodic then $\cQ_\al$ is covered by a product of symmetric chains. If $\al = [\be, \dots, \be]$ is periodic of period $d$ then
\[ \cQ_{[\be]}^{\frac{n}{d}}/\bZ_{\frac{n}{d}} \cover \cQ_{\al}  \]
where $[\be]$ is an aperiodic $d$-bead necklace and $\cQ_{[\be]}$ is a product of chains.  If $d > 1$, then $\frac{n}{d} < n$ so
\[ \cQ_{[\be]}^{\frac{n}{d}}/\bZ_{\frac{n}{d}} \]
is a symmetric chain order by induction.  Finally, if $\al$ is periodic of period $d = 1$ then $\al$ is an $n$-bead partition necklace of period 1 whose entries sum to $n$, so $\al = [1,1, \dots,1]$, but this element was explicitly excluded from the set $\cQ(n)$.
\end{proof}

{\bf Acknowledgements.}  I would like to thank the Department of Mathematics at Michigan State University for their hospitality.  I am especially grateful to Bruce Sagan for his encouragement while this project was under way.  This paper also benefited greatly from several referee comments.


\begin{thebibliography}{MMM}

\bibitem{dB}
N. G. de Bruijn, Ca. van Ebbenhorst Tengbergen, and D. Kruyswijk.  On the set of divisors of a number. {\em Nieuw Arch. Wiskunde (2)}, 23:191-193, 1951.

\bibitem{DMT}
Dwight Duffus, Jeremy McKibben-Sanders, and Kyle Thayer. Some Quotients of the Boolean Lattice are Symmetric Chain Orders. {\small \tt http://arxiv.org/abs/1107.1098}.

\bibitem{GK}
Curtis Greene and Daniel J. Kleitman. Strong versions of Sperner's theorem. {\em J. Combinatorial Theory Ser. A}, 20(1):80-88, 1976.

\bibitem{GKS}
Jerrold R. Griggs, Charles E. Killian, and Carla D. Savage.  Venn diagrams and symmetric chain decompositions in the Boolean lattice. {\em Electron. J. Combin.} 11 (2004).

\bibitem{HS} 
Patricia Hersh and Anne Schilling. Symmetric chain decomposition for cyclic quotients of Boolean algebras and relation to cyclic crystals. {\small\tt  http://arxiv.org/abs/1107.4073}.

\bibitem{J}
Kelly Kross Jordan. The necklace poset is a symmetric chain order.  {\em J. Combin. Theory} Ser. A 117 (2010), no. 6, 625-641.

\end{thebibliography}
\end{document}